	\newtheorem{theorem}{Theorem}[section]
	\newtheorem{proposition}{Proposition}[section]
	\theoremstyle{definition}
	\newtheorem{remark}{{Remark}}[section]
	\newtheorem{example}{Example}[section]
	\newtheorem{definition}{Definition}[section]
	\def\R{\Bbb R}
	\def\N{\Bbb N}
	\def\dom\,{\mbox{\rm dom}}
	\def\diam{\mbox{\rm diam}}
	\def\cone{\mbox{\rm cone}}
	\newcommand{\uS}{\mathbb{S}}
	\newcommand{\mL}{\mathcal{L}}
	\newcommand{\mG}{\mathcal{G}}
		\newsavebox{\myboxsb}
		\newenvironment{mybox}
		{\begin{lrbox}{\myboxsb}\begin{minipage}{\linewidth}}
				{\end{minipage}\end{lrbox}\fbox{\usebox{\myboxsb}}}
\begin{document} 

\begin{center}
{\sc\bf A New Notion of Tykhonov Well-Posedness for Optimization Problems}\\[1ex]
{\sc J. S. Chen\footnote{Department of Mathematics, National Taiwan Normal University, Taipei 116059, Taiwan (email: jschen@math.ntnu.edu.tw).}},
{\sc V. K. Hoang\footnote{Faculty of Mathematics and Computer
		Science, University of Science,
		Ho Chi Minh City, Vietnam.}$^,$\footnote{Vietnam National University, Ho Chi Minh City, Vietnam, (email: vokehoang@gmail.com).}} and
{\sc V. S. T. Long\footnotemark[2]$^,$\footnotemark[3]}
\end{center}
\vskip 0.4cm
\small{\bf Abstract.} 
Building upon the minimal time function, we propose and study a novel notion of {\em Tykhonov well-posedness with respect to a set of directions} for optimization problems. This concept generalizes the classical Tykhonov well-posedness by focusing on existence, stability and convergence along specific directions, rather than over the entire space. We first establish several characterizations of Tykhonov well-posedness with respect to a set of directions, formulated in terms of the diameter of level sets and admissible functions. We then investigate relationships between these level sets and admissible functions. 
To highlight the advantages of the proposed framework, we present several illustrative examples. In particular, we show that by selecting a suitable set of directions, 
optimization problems that are not well-posed in the classical sense may still be Tykhonov well-posed with respect to those directions. This viewpoint not only broadens the theoretical landscape of well-posedness but also has practical implications, as it allows numerical methods to be effectively adapted so that the generated sequences converge reliably to minimizers. 
\\[1ex]
{\bf Keywords.} Directional minimal time function; Directional Tykhonov well-posedness; Optimization problems; Level sets; Adimissible functions\\[1ex]
\noindent {\bf 2020 Mathematics Subject Classification.} 49K40; 49J40; 49J53; 49S05
		
\normalsize
\section{Introduction and Preliminaries}\label{intro}\vspace*{-0.1in}

The concepts of various types of well-posedness are crucial in nonlinear analysis, playing a central role in both theoretical frameworks and numerical methods. The fundamental results in this topic were established by
Hadamard \cite{hadamard1902problemes}.
In the early 1960s, Tykhonov \cite{tikhonov1966stability} studied well-posedness in the context of unconstrained minimization problems, defining it in terms of the existence and uniqueness of solutions, as well as the convergence of minimizing sequences to these solutions. This concept was later extended to constrained optimization problems by Levitin and Polyak \cite{levitin1966convergence}, who allowed minimizing sequences $x_k$ to lie outside the feasible set $A$, provided that the distance 
$d(x_k,A)$ (the distance from $x_k$
to $A$) approaches zero.
Since these fundamental works, numerous definitions of well-posedness have been introduced and extensively studied (see, e.g., \cite{anh2021tikhonov,bianchi2010well,canovas1999stability,crespi2007well,dontchev2006well,furi1970well,furi1970characterization,gutierrez2012pointwise,huang2006generalized,ioffe2005typical,lucchetti2020well,morgan2006discontinuous,sofonea2019well,zolezzi1981characterization} and the references therein). The main directions for developing Hadamard's, Tykhonov's and Levitin-Polyak's famous results so far are introducing generalized well-posesnesses and extending them to optimization-related problems in infinite dimensional spaces. It should be emphasized that in the aforementioned publications, most types of well-posedness have been
formulated from usual distance functions.

Among the most significant types of generalized distance functions, we mention the so-called minimal time function. Due to its significant applications in areas such as convex analysis, variational analysis, and optimization theory, this class of functions has been the subject of extensive and systematic research by many researchers. Relevant studies can be found in works like \cite{bardi1990approximation, colombo2010well, colombo2004subgradient, he2006subdifferentials, jiang2012subdifferential, mordukhovich2010limiting, mordukhovich2011subgradients, wolenski1998proximal}, which include numerous references and discussions on the topic.
One of the key advantages of the minimal time function is its ability to serve as a very general form of directional distance function.
Nam and Z\u{a}linescu \cite{nam2013variational} were 
crucial players in this process from the beginning. They studied several generalized differentiation properties of this class of functions and applied them to investigate location problems. 
The work was later significantly extended by Durea et al.~\cite{durea2016minimal, durea2017new}, who provided numerous applications in the theory of directional metric regularity. These developments paved the way for further research. For instance, Cibulka et al.~\cite{cibulka2020stability} investigated the stability of directional regularity, while Long et al.~\cite{long2021new, long2022invariant, long2023directional,KBL} introduced and analyzed new notions of directional error bounds and directional Levitin-polyak well-posedness for optimization problems, in which conventional distance functions were replaced by minimal time functions.  
In the same line, Chelmu{\c{s}} et al.~\cite{chelmucs2019directional} proposed the concept of directional Pareto solutions for set-valued constrained optimization problems, thereby generalizing the classical notion of Pareto efficiency. Further contributions to optimality conditions were provided by Ait Mansour et al.~\cite{ait2022strict} and Chelmu{\c{s}}~\cite{chelmus2022exact}. More recently, Chelmu{\c{s}} and Durea~\cite{chelmucs2021stability} obtained significant results on the stability of these directional Pareto solutions.  
It is important to emphasize that stability theory is intrinsically connected to well-posedness. Moreover, as shown in \cite{KBL}, the notion of directional Pareto solutions coincides with that of directional minimal solutions in scalar optimization problems.

Building on the research direction initiated by Durea et al.~\cite{durea2017new} and inspired by subsequent works such as \cite{burlicua2023directional, chelmucs2019directional, cibulka2020stability, long2022invariant, long2023directional, lucchetti1981characterization, nam2013variational}, 
it is natural to employ the minimal time function to introduce a new type of Tykhonov well-posedness for optimization problems. 
This framework allows for the unified treatment of both convex and non-convex cases. 
In particular, many practical optimization problems either lack global solutions or exhibit instability across all directions. 
However, the concept of Tykhonov well-posedness with respect to a set of directions makes it possible to identify stable solutions along specific directions. 
Such directionally stable solutions can yield valuable insights into the problem even in situations where classical Tykhonov well-posedness does not hold.

Our paper is structured as follows. Section \ref{sec:tykhonov} introduces a class of minimal time functions and proposes a new notion of Tykhonov well-posedness for optimization problems. Section \ref{sectionbasis} is devoted to establishing fundamental properties of minimal time functions, which will serve as technical tools for proving the main results. Section \ref{seccharecterization} presents characterizations of Tykhonov well-posedness with respect to a set of directions. We first provide results that do not require convexity of the feasible sets or the objective functions. Then, we formulate a directional version of Ekeland's variational principle and use it to obtain further characterizations under convexity assumptions. Finally, Section \ref{secrelasionship} investigates the properties of admissible functions and explores their relationships with level sets.


Throughout this paper (unless otherwise specified), the space $X$ under consideration is an arbitrary Banach space endowed with the norm $\|\cdot\|$, and $\langle \cdot,\cdot \rangle$ denotes the canonical pairing between $X$ and its topological dual $X^*$. The open unit ball and the unit sphere in $X$ are denoted respectively by $\mathbb B$ and $\mathbb{S}$. Given a set $C \subset X$, the diameter of $C$ is defined by
$$\diam(C) := \sup\{\|x-y\| \mid x,y \in C\}.$$
As usual, $\cone(C)$ denotes the cone generated by $C$, that is,
$$\cone(C):=\{\lambda x\mid \lambda \geq 0,\;x\in C\}.$$
The distance function from a point $x \in X$ to $C$ is given by
$$		d(x,C) := \inf\{\|x-y\|\mid y \in C\},$$
with the convention $d(x, \emptyset) := \infty$, i.e., $\inf \emptyset = \infty$. We also use $\mathbb{N}$ and $\mathbb{R}$ to denote the sets of natural numbers and real numbers, respectively.

\section{A new notion of Tykhonov well-posedness} 
\label{sec:tykhonov}

In this section, we introduce a new notion of Tykhonov well-posedness for optimization problems. We also present an example to highlight the significance of our study.

We begin by recalling the following class of functions, which plays a crucial role in our study.

\begin{definition}\label{def1}
	Let $M$ be a nonempt, closed and bounded subset of $X\setminus \mathbb B$ such that $\cone (M)$ is convex. Let $\Omega : X\to 2^X$ be a set-valued mapping. Then the function
	\begin{equation}\label{minimaltime}
		T_{M}(y,\Omega(x)):=\inf\{t\geq 0\mid (y+tM)\cap\Omega(x)\neq \emptyset\}\textrm{ for } (y,x)\in X\times X
	\end{equation}
	is called \textit{the minimal time function with respect to $M$.} By convention, we set $T_M (y, \emptyset):= \infty$  for every $y\in X$. 
\end{definition}

We do not consider general functions 
$T_{M}$ in this paper. Instead, we focus on the two cases below, which are sufficient for our purposes. For $\emptyset \neq O \subset X$, we define:

1) $T_M(y, O) := T_{M}(y,\Omega(x))$ when $\Omega(x)\equiv  O$ for all $x\in X$;

2) $T_M(y, x) := T_{M}(y, \Omega(x))$ when $\Omega$ is the identity map.

\begin{remark}
	
	It should be emphasized that the approach via the minimal time function, under the assumption that $M$ is a subset of the unit sphere $\mathbb{S}$ and $\Omega(x) \equiv O$ for all $x \in X$, has proven to be highly effective in studying several directional aspects of optimization (see, e.g., \cite{burlicua2023directional, chelmucs2019directional, cibulka2020stability, durea2016minimal, durea2017new, KBL, long2022invariant, long2023directional, lucchetti1981characterization, nam2013variational}). Furthermore, by considering other choices of $M$ and $\Omega$, the minimal time function has been investigated by many researchers for various purposes (see, e.g., \cite{colombo2010well, colombo2004subgradient, he2006subdifferentials, jiang2012subdifferential, mordukhovich2011applications, mordukhovich2011subgradients} and the references therein).
\end{remark}		
For any $x,y\in X$, the \textit{domains} of $T_M(\cdot,x)$ and $T_M(y,\cdot)$ are denoted and defined respectively by
$$\dom\,T_M(\cdot, x):=\{z\in X\mid T_M(z,x)<\infty\}$$
and
$$\dom\,T_M(y,\cdot):=\{w\in X\mid T_M(y,w)<\infty\}.$$

Let $f\colon X\to \mathbb{R}$ be a function. Consider the following optimization problem:
\[
\textrm{(MP)}\quad\text{minimize } f(x) \text{ subject to } x\in X.
\]		

Using the minimal time function \eqref{minimaltime}, we now introduce the notion of a global minimum point of $f$ with respect to $M$ as follows:

\begin{definition}\label{mind}  Consider the problem (MP) and the minimal time function given in Definition \ref{def1}. We say that $\bar x\in X$ is \textit{a global 
		minimum point of $f$ with respect to (the set of directions) $M$} on $D_{\bar x}$ if and only if
	\[
	f(\bar x)\leq f(x)\text{ for all } x\in \dom\, T_M(\cdot,\bar x).
	\]
\end{definition}

For the sake of simplicity, throughout the paper we set

$$D_x := \dom\, T_M(\cdot, x) \quad \text{for all } x \in X.$$

\begin{remark}
	Clearly, if $M = \mathbb{S}$, then global minimum points of $f$ with respect to $M$ coincide with the usual global minimum points. Moreover, if $M$ is a proper subset of the unit sphere $\mathbb{S}$, then global minimum points of $f$ with respect to $M$ reduce to global directional minimum points of $f$ as defined in \cite[Definition~2]{KBL}.  
	For a more general concept of directional Pareto minimality for set-valued mappings with respect to a set of directions $M$, we refer the interested reader to \cite{ait2022strict,chelmus2022exact,chelmucs2021stability,chelmucs2019directional}.
\end{remark}

As a straightforward extension of the usual notion of Tykhonov well-posedness, we may naturally propose the following.

\begin{definition}\label{wellposedd} 	The problem (MP) is said to be \textit{Tykhonov well-posed with respect to $M$} if and only if 
	\begin{enumerate}
		\item [(i)] there exists $\bar x\in X$ such that it is the unique global minimum point of $f$ with respect to $M$ on $D_{\bar x}$, and 
		\item [(ii)] for any sequence $(x_k)\subset D_{\bar x}$ satisfying $f(x_k)\to f(\bar x)$, one has~$T_M(x_k,\bar x)\to 0$ as $k\to\infty$. 
	\end{enumerate}
\end{definition}

\begin{remark}
	\begin{enumerate}
		\item [{\rm (i)}] To the best of our knowledge, the concept of Tykhonov well-posedness with respect to $M$ has not yet been addressed in the literature. When $M=\uS$ (the unit sphere), this notion reduces to the classical Tykhonov well-posedness; see \cite[pp. 1]{tikhonov1966stability}.
		
		\item [{\rm (ii)}]	The notion of Tykhonov well-posedness with respect to $M$ is useful for problems where global solutions may lack stability or may not be well-defined, while solutions along specific directions demonstrate well-posedness properties. These aspects will be illustrated in the following examples.
	\end{enumerate}
	
\end{remark}


\begin{example}
	Let $X=\ell^2$, the space of square summable sequences of real numbers, and let $f: X\to \mathbb{R}$ be given by
	$$
	f(x):=\left\{\begin{array}{ll}
		\sum_{i=1}^{\infty} x^{(i)} & \text { if } x^{(i)}\neq 0 \text{ for at most finitely many }i, \\
		1 & \text { otherwise, }
	\end{array} \right.
	$$
	where $x^{(i)}$ denotes the $i$-th coordinate of $x\in \ell^2$. Since $f$ is not bounded below on X, problem (MP) has no usual solution.
	
	Now let $\bar x=(0,0,...)$ and $M=\{-e_1=(-1,0,...)\}$. Then we obtain from Proposition~2.2(i) in \cite{durea2016minimal} that $$D_{\bar x}=\bar x-\cone(M)=\{(\alpha,0,...)\mid \alpha \geq 0\},$$
	and hence
	$$f(\bar x)=0<\alpha_x =\sum_{n=1}^{\infty}x^{(i)}=f(x)\text{  for all }x=(\alpha_x,0,...)\in D_{\bar x}\setminus\{\bar x\},$$
	which means $\bar x$ is the unique global minimum of $f$ with respect to~$M$ on $D_{\bar x}$. Next, take any sequence $(x_k)$ in $D_{\bar x}$. Then we have $x_k=(\alpha_k,0,..)$ for some $\alpha_k\geq 0$. Since $x_k\in \dom\, T_M(\cdot,\bar x)$, it follows from Proposition 2.3 (i) in \cite{durea2016minimal} that
	\begin{equation}\label{key22}
		T_M(x_k,\bar x)=\|\bar x-x_k\|=\alpha_k.
	\end{equation}
	Moreover, if $f(x_k)\to f(\bar x)$, then $\alpha_k\to 0$.
	This together with (\ref{key22}) implies that $T_M(x_k,\bar x)\to 0$. Therefore, problems (MP) is Tykhonov well-posed with respect to $M$. 
\end{example}

\section{Elementary Properties of minimal time functions}
\label{sectionbasis}

This section is devoted to establishing several fundamental properties of certain special cases of the minimal time function~\eqref{minimaltime}, which are well suited to our aims in the subsequent sections. 

Consider the set $M$ given in Definition \ref{def1} and put
$$\|M\|:=\sup\{\|u\|\mid u\in M\}.$$
Under the conditions on the set $M$ imposed in Definition \ref{def1}, we obtain  $$1\leq \|M\|<\infty.$$	

We begin with the following proposition, some steps in the proof of which are inspired by the arguments used in \cite{durea2016minimal,mordukhovich2010limiting,mordukhovich2011subgradients}.

\begin{proposition}\label{remark1}
	Let $x, y \in X$. Then the following statements hold:
	\begin{enumerate}
		\item[{\rm (i)}] The domains of $T_M(\cdot,x)$ and $T_M(y,\cdot)$ are given, respectively, by
		\[
		\dom\, T_M(\cdot, x) = x - \operatorname{cone}(M)
		\quad \text{and} \quad
		\dom\, T_M(y, \cdot) = y + \operatorname{cone}(M),
		\]
		and both sets are closed.
		\item[{\rm (ii)}] For every $z \in \dom\, T_M(\cdot, x)$, there exists an element $u_z\in M$ such that
		
		\begin{equation}\label{dangthuc1}
			z=x+u_zT_M(z,x).
		\end{equation}
		Similarly, for every $w \in \dom\, T_M(y,\cdot)$, there exists an element $u_w\in M$ such that
		\[
		w = y+ u_w\,T_M(y,w).
		\]
	\end{enumerate}
\end{proposition}

\begin{proof}
	We proceed with the proofs of the first assertions of (i) and (ii) while observing that the verification of the second assertions of (i) and (ii) are similar. 
	
	(i) Pick any $z \in \dom\,T_M (\cdot,x)$. Then, we have $T_M (z,x) <\infty$. By Definition \ref{def1}, there are a number $t \geq 0$ and $u \in M$ such that $z + tu=x$. Consequently, 
	$$z=x - tu \in  x- \cone (M ).$$
	The reverse inclusion is obvious. Moreover, since $0\notin M$ and $M$ is a closed and bounded set, it follows from \cite[Theorem 2.5(i)]{durea2013} that $\cone(M)$ is closed. Therefore, $\dom\,T_M(\cdot,x)$ is also closed. This completes the proof of the first assertion of (i). 
	
	(ii) Fix any $z\in \dom\,T_M(\cdot,x).$ Then there exist sequences $(t_k) \subset [0,\infty)$ and $(u_k )\subset M$ such that $$z + t_ku_k=x\quad\text{and}\quad t_k\to T_M(z,x)\text{ as } k\to \infty
	.$$ There are two possible cases: either $T_M(z,x)=0$ or $T_M(z,x)>0$. In the case where $T_M(z,x)=0$, we have
	$$\|z-x\|=t_k\|u_k\|\leq t_k\|M\|.$$
	Since $\|M\|<\infty$ and $t_k\to T_M(z,x)= 0$, if follows that $\|z-x\|= 0$, 
	and hence $z=x$. This implies that the equality \eqref{dangthuc1} holds. In the second case, we get for $k$ sufficiently large that
	$$\frac{x-z}{t_k}=u_k\in M\quad \text{and}\quad u_k\to u_z:=\frac{x-z}{T_M(z,x)}\text{ as } k\to\infty.$$ 
	Since $M$ is closed, we obtain $u_z\in M$, which again yields \eqref{dangthuc1}. The proof of the proposition is complete. 
\end{proof}

\begin{remark} \label{vdkhac}
	If $M$ is a subset of $\mathbb S$, it follows from Proposition \ref{remark1}(ii) that for every $z\in \dom\,T_M(\cdot,x)$, we have
	$$\|z-x\|=T_M(z,x).$$
	However, in general, $T_M(z,x)$ may differ from $T_M(x,z)$, as illustrated by the following simple example.  
\end{remark}

\begin{example} \label{vdratkhac}
	Let $X=\mathbb{R}$, $M=\{-1\}$ and $x=2$. By Proposition \ref{remark1}(i), we have
	$$\dom\,T_M(\cdot,x)=x-\cone(M)=[2,\infty).$$
	Take $z=3$. Then it follows from Definition \ref{def1} that
	$$T_M(3,2)=\inf\{t\geq 0\mid 3-t=2\}=1=\|3-2\|.$$
	On the other hand, 
	$$T_M(2,3)=\inf\{t\geq 0\mid 2-t=3\}=\infty.$$
	Thus, $T_M(3,2)\neq T_M(2,3)$.
\end{example}

The next results will serve as essential tools for proving the main theorems of the paper.		

\begin{proposition}\label{lemmato0}
	For every $\epsilon >0$ and $\emptyset\neq O \subset X$, we put
	$$O^{\epsilon} := \{x\in X\mid T_M(x,O)\leq \epsilon\}.$$
	If $\diam (O)\to 0$ as $\epsilon\to 0$, then we have $\diam (O^{\epsilon})\to 0$ as $\epsilon\to 0$. 
\end{proposition}
\begin{proof}
	For every $\epsilon > 0$, since $T_M(x_\epsilon,O)\leq \epsilon$ and $T_M(y_\epsilon,O)\leq \epsilon$ for all $x_\epsilon,y_\epsilon\in O^{\epsilon}$, there exist $v_\epsilon,w_\epsilon\in O$ such that 
	\begin{equation}\label{keyepsilon}
		T_M(x_\epsilon,v_\epsilon)<2\epsilon\quad\text{and}\quad T_M(y_\epsilon,w_\epsilon)<2\epsilon.	
	\end{equation}
	Then it follows from Proposition \ref{remark1}(ii) that there exist $u_x,u_y\in M$	such that		 
	$$\begin{array}{ll}
		\|x_\epsilon-y_\epsilon\|&\leq \|x_\epsilon-v_\epsilon\| + \|v_\epsilon - w_\epsilon\| + \|w_\epsilon - y_\epsilon\|\\[0.1in]
		&= \|u_x\|T_M(x_\epsilon,v_\epsilon)  + \|v_\epsilon-w_\epsilon\|+\|u_y\|T_M(y_\epsilon,w_\epsilon)\\[0.1in]
		&\leq 4\|M\|\epsilon + \diam (O)\quad\text{(by \eqref{keyepsilon})}
	\end{array}$$
	Since $\|M\|<\infty$ and $\diam (O)\to 0$ as $\epsilon\to 0$, it follows that $\diam (O^{\epsilon})\to 0$ as $\epsilon\to 0$.
\end{proof}

We now recall a classical property of the minimal time function, namely its triangle inequality.

\begin{proposition}\label{long2023directional}
	For any $x,y,z\in X$, we have
	\begin{equation}\label{triangle}
		T_M(z,x)\leq T_M(z,y)+T_M(y,x).
	\end{equation}
\end{proposition}
\begin{proof} The proof is similar to the arguments of results in \cite{durea2017new,long2022invariant}. However, we prefer to give the details, for the sake of readability
	
	If either $T_M(z,y) = \infty$ or $T_M(y,x) = \infty$, the inequality \eqref{triangle} is trivial. So  
	assume both are finite. By Proposition \ref{remark1}(ii), there exist $u_1, u_2 \in M$ such that
	\[
	y  =z+ T_M(z,y)\,u_1, \quad\text{and}\quad x  = y+T_M(y,x)\,u_2.
	\]
	Therefore,
	\[
	x = z+T_M(z,y)\,u_1 + T_M(y,x)\,u_2.
	\]
	Since $\operatorname{cone}(M)$ is convex, it follows that $$u := \frac{T_M(z,y)}{T_M(z,y)+T_M(y,x)} u_1 + \frac{T_M(y,x)}{T_M(z,y)+T_M(y,x)} u_2 \in \operatorname{cone}(M),$$ and hence $x$ can be expressed as:
	\[
	x = z+ [T_M(z,y)+T_M(y,x)] u.
	\]
	By the definition of the minimal time \eqref{minimaltime}, we obtain the desired inequality:
	\[
	T_M(z,x) \leq \ T_M(z,y) + T_M(y,x).\] 
\end{proof}	

A basic inclusion property of the domains of the minimal time function can be stated as follows:	

\begin{proposition}\label{lemma2.2}
	For any $x \in X$, if $y \in \dom\, T_M(\cdot,x)$, then  
	\[
	\dom\, T_M(\cdot,y) \subset \dom\, T_M(\cdot,x).
	\]
\end{proposition}

\begin{proof} Fix any $y\in\dom\,T_M(\cdot,x)$.
	Take an arbitrary $z \in \dom\, T_M(\cdot,y)$. Then, by Proposition~\ref{long2023directional}, we have
	\[ T_M(z,x) \leq \ T_M(z,y) + T_M(y,x) < \infty,\] 
	which yields $z \in \dom\,T_M(\cdot,x)$.  
	Since $z$ was chosen arbitrarily from $\dom\, T_M(\cdot,y)$, we conclude that  
	\[
	\dom\, T_M(\cdot,y) \subset \dom\, T_M(\cdot,x).
	\]
\end{proof}

To end this section, we provide a property concerning the special case of the minimal time function \eqref{minimaltime}. 

\begin{proposition}\label{pronew}
	Consider the minimal time function defined in Definition \ref{def1}. Suppose in addition that $M$ is a subset of the unit sphere $\mathbb{S}$.
	Then, for every $t\in[0,\infty)$, there exists $x\in D_{\bar x}$ such that $$T_M(x,\bar x)=t.$$
\end{proposition}

\begin{proof}
	If $t=0$, simply take $x=\bar x$. Then $T_M(\bar x, \bar x)=0.$ Suppose $t>0$. Choose any $ u^\ast\in M$ and set 
	\[
	x:=\bar x - t\,u^\ast.
	\]
	Then $\bar x = x + t\,u^\ast \in x + tM$, so by definition $T_M(x,\bar x)\le t$.
	
	Suppose, to the contrary, that $T_M(x,\bar x)=s<t$. Then it follows from Proposition \ref{remark1}(ii) that there exists $u\in M$ such that
	\[
	x = \bar x - s u,\] 
	which implies
	\[s\,u=t\,u^\ast .
	\]
	Taking norms gives
	\[
	\|u\|=\frac{t}{s}\,\|u^\ast\|=\frac{t}{s}>1,
	\]
	contradicting the assumption that $M\subset \mathbb{S}$. Therefore, $T_M(x,\bar x)=t$.
\end{proof}

\section{Characterizations of Tykhonov well-posedness with respect to~$M$} \label{seccharecterization}

In this section, we present characterizations of Tykhonov well-posedness with respect to $M$ for both convex and nonconvex optimization problems. Moreover, the examples below illustrate that Tykhonov well-posedness with respect to $M$ offers a valuable perspective on optimization and analysis.

\subsection{Well-posedness without convexity}
We begin with a characterization of Tykhonov well-posedness with respect to $M$ for problem (MP) in terms of level sets of $f$. It is often called the Furi--Vignoli criterion.

\begin{theorem}\label{theo1}
	Consider the problem {\rm (MP)} and the minimal time function \eqref{minimaltime}. Suppose that $f$ is lower semicontinuous.
	Then ${\rm (MP)}$ is Tykhonov well-posed with respect to $M$ if and only if there exists $\bar x\in X$ such that f is bounded from below on $D_{\bar x}= \dom\, T_M(\cdot,\bar x)$ and  
	\begin{equation}\label{eqthm1}
		\diam(\mathcal{L}(\bar x,\epsilon))\to 0\quad\text{as }\epsilon\to 0,
	\end{equation}
	where $\epsilon >0$ and 
	\begin{equation}\label{levelset}
		\mL(\bar x,\epsilon):=\left\{y\in D_{\bar x}\Big | f(y)\leq \inf_{x\in D_{\bar x}}f(x)+\epsilon\right\}.
	\end{equation}
\end{theorem}
\begin{proof}
	If ${\rm (MP)}$ is Tykhonov well-posed with respect to $M$, then there exists $\bar x\in X$ such that $$f(\bar x) < f(x)\quad\text{for all }x\in D_{\bar x}\setminus\{\bar x\},$$
	which implies that  $f$ is bounded from below on $D_{\bar x}.$  Next, we proceed by contradiction. Suppose that 
	there exists a sequence $(\epsilon_k)$ 
	such that $$\diam(\mL(\bar x,\epsilon_k))\not\to 0\quad\text{as }\epsilon_k \to 0.$$ 
	Then we can select a subsequence of $(\epsilon_k)$ (without relabeling) and a number $a>0$ such that $$\diam(\mL(\bar x,\epsilon_k))>2a.$$ 
	Consequently, for every $k\in \N$, there exists $x_k\in \mL(\bar x,\epsilon_k)$ such that $\|x_k-\bar x\|\geq a$ and 
	\[
	f(\bar x) < f(x_k)\leq \inf_{x\in D_{\bar x}} f(x) + \epsilon_k = f(\bar x) + \epsilon_k.
	\]
	This yields $f(x_k)\to f(\bar x)$ as $k\to\infty$. However, since $x_k\in \dom\, T(\cdot,\bar x)$ and $\|x_k-\bar x\|\geq a$, it follows from Proposition~\ref{remark1}(ii) that there exists $u_k\in M$ such that
	\[
	T_M(x_k,\bar x)= \frac{\|x_k-\bar x\|}{\|u_k\|}\geq \frac{a}{\|M\|}>0\quad\text{for all }k\in \mathbb N.
	\]
	This contradicts condition (ii) of Definition \ref{wellposedd}. Therefore, 
	$$\diam(\mL(\bar x,\epsilon_k))\to 0\quad\text{as }\epsilon_k \to 0.$$ 
	
	Conversely, suppose that there exists $\bar x\in X$ such that $f$ is bounded from below on $D_{\bar x}$ and $$\diam(\mL(\bar x,\epsilon))\to 0\quad\text{as } \epsilon \to 0.$$ 
	Then, $\inf\limits_{x\in D_{\bar x}}f(x)\in\R$, and hence for every $k\in\N$, there exists $x_k\in D_{\bar x}$ such that
	\[
	f(x_k)<\inf_{x\in D_{\bar x}}f(x)+\frac{1}{2^{k}}.
	\]
	Note that if $y\in \mL(\bar x,1/2^{k+1})$ then 
	\[
	f(y)\leq \inf_{x\in D_{\bar x}}f(x)+\frac{1}{2^{k+1}}<\inf_{x\in D_{\bar x}}f(x)+\frac{1}{2^{k}}.
	\]
	This implies that $\mL(\bar x,1/2^{k+1})\subset \mL(\bar x,1/2^{k})$ for all $k\in \N$.
	Moreover, since $f$ is lower semicontinuous, the set $\mL(\bar x,1/2^{k})$ is closed for all $k\in\N$. On the other hand, by the hypothesis, we have $\diam\left(\mL(\bar x,1/2^{k})\right)\to 0$ as $k\to \infty$. Thus, the assumptions of Cantor's intersection theorem are satisfied. Applying this theorem, we obtain an element $\bar y \in D_{\bar x}$ such that
	\begin{equation}\label{unique}
		\bigcap_{k\in \N}\mL(\bar x,1/2^{k})=\{\bar y\}.		
	\end{equation}
	By the definition of the level set $\mL(\bar x,1/2^{k})$, one has $$f(\bar y)\leq \inf_{x\in D_{\bar x}}f(x)+\frac{1}{2^k}\text{ for all }k\in \N,$$ and hence $f(\bar y)\leq \inf\limits_{x\in D_{\bar x}}f(x)$. Furthermore, since $\bar y \in D_{\bar x}$, it follows that 
	\begin{equation}\label{key1}
		f(\bar y) = \inf_{x\in D_{\bar x}}f(x).
	\end{equation}
	Now, we show that (MP) is Tykhonov well-posed with respect to~$M$. Indeed, since $\bar y\in D_{\bar x}$, it follows from Proposition \ref{lemma2.2} that
	\[
	D_{\bar y} = \dom\, T_M(\cdot,\bar y)\subset \dom\, T_M(\cdot,\bar x) = D_{\bar x}.
	\]
	Combining this with (\ref{key1}), we obtain  
	\[
	\inf_{y\in D_{\bar y}}f(y)\leq f(\bar y) = \inf_{x\in D_{\bar x}}f(x)\leq\inf_{y\in D_{\bar y}}f(y),
	\]
	which yields $$f(\bar y)=\inf\limits_{y\in D_{\bar y}}f(y).$$ To verify that $\bar y$ is the unique global minimum
	point of $f$ with respect to $M$ on $D_{\bar y}$, we argue by contradiction. Suppose that there exists $\bar z\in D_{\bar y}\setminus\{\bar y\}$ such that $f(\bar z)=f(\bar y)$. Then it follows from (\ref{key1}) and the definition of $\mL(\bar x,1/2^k)$ that $$\bar z\in \mL(\bar x,1/2^k)\text{ for all }k\in\mathbb{N}.$$ Combining this with (\ref{unique}) gives a contradiction. This yields that condition (i) of Definition \ref{wellposedd} holds.
	It remains to verify condition (ii) of Definition \ref{wellposedd}. Specifically, we need to show that for any sequence $(y_k)\subset D_{\bar y}\subset D_{\bar x}$, if $f(y_k)\to f(\bar y)$ then 
	\begin{equation}\label{keynew6}
		T_M(y_k,\bar y)\to 0\quad \text{as }k\to \infty. 
	\end{equation}
	Indeed, since $f(\bar y)=\inf\limits_{x\in D_{\bar x}}f(x)$, we obtain from \eqref{levelset} that $$\bar y, y_k\in \mL(\bar x, f(y_k)-f(\bar y))\text{ for all }k\in\N.$$ 
	Therefore, 
	\begin{equation}\label{key17}
		\|y_k-\bar y\| \leq \diam(\mL(\bar x, f(y_k)-f(\bar y))\text{ for all }k\in\N. 
	\end{equation}
	Since
	$f(y_k)-f(\bar y)\to 0$ as $k\to \infty$, it follows from \eqref{eqthm1} that 
	$$\diam\big(\mL(\bar x, f(y_k)-f(\bar y))\big)\to 0\quad\text{as }k\to~\infty.$$ 
	Combining this with (\ref{key17}) gives $$\|y_k-\bar y\|\to~0\quad\text{as }k\to \infty.$$ 
	By Proposition \ref{remark1}(ii), this further implies that there exists $u_k\in M$ such that 
	$$\|u_k\| T_M(y_k,\bar y)=\|y_k-\bar y\|\to 0\quad\text{as } k\to \infty.$$
	Under the assumptions on the set $M$ given in Definition \ref{def1}, we have  $$1\leq \|u_k\|<\infty.$$ Consequently, the relation \eqref{keynew6} holds. This justifies the reverse implication and completes the proof of the theorem.
\end{proof}

\begin{remark} Unlike \cite[Theorem~1]{KBL}, where only a sufficient condition was given, condition \eqref{eqthm1} yields a complete characterization of Tykhonov well-posedness with respect to $M$. Moreover, if $M=\mathbb{S}$, then Theorem \ref{theo1} becomes Theorem 2.2 in \cite{furi1970well} (for the case of Banach
	spaces). 
\end{remark}
The following classical example illustrates the advantages of Theorem \ref{theo1} by showing that while global solutions may lack stability in all directions, Tykhonov well-posedness with respect to $M$ enables stability to be attained in specific directions.

\begin{example}\label{exht}	Let $X=\R$ and let $f\colon X\to \R$ be defined by $f(x)=x^2 e^{-x}$. Then, (MP) is not Tykhonov well posed, since the sequence $(x_k)=(k)$ is minimizing but it does not converge to the unique minimum $0$.
	
	To try with Theorem \ref{theo1}, we take $M=\{3\}$ and $\bar x = \frac{1}{2}$. It is easy to see that $f$ is bounded on $D_{\bar x}=\bar x - \cone (M) = (-\infty,1/2]$. Next, we prove that for any $\epsilon >0$,  
	\begin{equation}\label{eq0}
		\mL\left(1/2,\epsilon\right)=\left\{x\in (-\infty,1/2]\mid x^2e^{-x}\leq \epsilon\right\}\subset (-\sqrt[3]{\epsilon},\sqrt[3]{\epsilon}).
	\end{equation}
	Indeed, for each $x\in \mL\left(1/2,\epsilon\right)$, a direct verification shows that $$-x^3<x^2e^{-x}\leq \epsilon,$$ and thus
	$	-\sqrt[3]{\epsilon}<x.$
	Moreover, if $x\in \mL(1/2,\epsilon)$ then we also have $$x^3<x^2e^{-x}\leq\epsilon,$$ which implies
	$		x<\sqrt[3]{\epsilon},$	and therefore (\ref{eq0}) holds.
	It follows from (\ref{eq0})  that $\diam(\mL(1/2,\epsilon))\to 0$ as $\epsilon\to 0$. According to Theorem \ref{theo1}, the problem (MP) is Tykhonov well-posed with respect to $M$. It is interesting to note that $\bar x=1/2$ is not a minimum point of $f$ on $(-\infty,1/2]$.
\end{example}

\begin{remark}
	One of the key advantages of well-posedness with respect to $M$ is that it facilitates suitable adaptations of numerical algorithms (when necessary) to ensure that the generated sequences converge reliably to the unique solution. To illustrate this advantage, we now apply Newton's method to Example~\ref{exht}.

	As seen in Example \ref{exht}, if a sequence generated by Newton's method is minimizing and contained in $(-\infty,1/2]$, then it converges to the unique minimum point $0$ with respect to $M$ (the right direction). However, sequences that lie outside $(-\infty,1/2]$, may not converge to $0$. Indeed, we can compute directly that
	$$f'(x)=e^{-x}(2x-x^2)\text{ and } f''(x)=e^{-x}(x^2-4x+2).$$
	Consider the iteration: 
	\begin{equation}\label{key777}
		x_{k+1}:=x_k+p_k=x_k+\frac{x^2_k-2x_k}{x^2_k-4x_k+2}=\frac{x^2_k(x_k-3)}{(x_k-2)^2-2},
	\end{equation}
	where $p_k:=-\frac{f'(x_k)}{f''(x_{k})}$ is called the Newton direction. 
	
	Starting with $x_1=0.6$, we observe that the sequence $(x_k)$, produced by Newton's Method, is contained in $ [1,\infty)$. Despite $f(x_k)$ decreasing rapidly starting from the second iteration $(f(x_2)\approx 1.941* 10^{-7}$), the sequence $(x_k)$ does not converge to the minimum point because $x_k<x_{k+1}$ for all $k$.

	Nevertheless, we can choose any arbitrary starting point $y_1\leq 1/2$, even if it is far from the minimum point, such as $y_1=-100$. Then the relation~\eqref{key777} guarantees the generation a sequence $y_k\in  (-\infty,0]$ for all $k\geq 2$. Moreover, since $f''(y_k)>0$ for all $y_k\leq 0$, the corresponding Newton direction $p_k$ is a descent direction (see, e.g., \cite{optimnumerical}). Therefore, the sequence $(y_k)$ converges to the minimum point with respect to $M$, as shown in Example \ref{exht}, see Figure \ref{figure1}. 		
\end{remark}

\begin{figure} 
	\begin{mybox}
		\centering
		\begin{tikzpicture}
			\begin{axis}[
				width=12cm, height=7cm,
				axis lines=middle,
				xlabel={$x$}, ylabel={$f(x)=x^2 e^{-x}$},
				grid=none,
				xmin=-8, xmax=10, ymin=-1, ymax=4,
				domain=-8:10, samples=350, smooth,
				restrict y to domain*=-1:5,
				unbounded coords=discard,
				xticklabels=\empty, yticklabels=\empty, tick style={draw=none},
				]
				\addplot[very thick] {x^2*exp(-x)};
				
				\addplot+[only marks, mark=*, mark size=2.5pt,
				mark options={fill=red, draw=red}] coordinates {(0.6,0)};
				\node[below] at (axis cs:0.6,0) {$x_1$};
				
				\addplot+[only marks, mark=*, mark size=2.5pt,
				mark options={fill=red, draw=red}] coordinates {(7,0)};
				\node[below] at (axis cs:7,0) {$x_k$};
				
				\addplot+[only marks, mark=*, mark size=2.5pt,
				mark options={fill=red, draw=red}] coordinates {(9,0)};
				\node[below] at (axis cs:9,0) {$x_{k+1}$};
				
				\addplot+[only marks, mark=*, mark size=2.5pt,
				mark options={fill=blue, draw=red}] coordinates {(-1.5,0)};
				\node[below] at (axis cs:-1.5,0) {$y_k$};
				
				\addplot+[only marks, mark=*, mark size=2.5pt,
				mark options={fill=blue, draw=red}] coordinates {(-0.5,0)};
				\node[below] at (axis cs:-0.52,0) {$y_{k+1}$};
			\end{axis}
		\end{tikzpicture}
	\end{mybox}
	\caption{Illustration of the sequences $\{x_k\}$ and $\{y_k\}$ generated by Newton's method.}
	\label{figure1}	
\end{figure}
\vskip 1cm

To proceed, we recall the concept of \textit{admissible} (also called \textit{forcing}) \textit{functions} (see, e.g., \cite[pp.~5--6]{dontchev2006well}).
\begin{definition}\label{admisd} Let $D\subset [0,\infty)$ be such that $0\in D$.
	A function $c\colon D\rightarrow [0,\infty)$ is called admissible if and only if
	\begin{enumerate}
		\item [{\rm(i)}] $c(0)=0$ and
		\item [{\rm(ii)}] for any sequence $(t_k)\subset D$ satisfying $c(t_k)\rightarrow 0$, one has $t_k\rightarrow 0$.
	\end{enumerate} 
\end{definition}

Without the lower semicontinuity of $f$, the Tykhonov well-posedness with respect to $M$ of problem (MP) can be characterized by the above admissible function as follows:

\begin{theorem}\label{theo2}
	Problem (MP) is Tyknonov well-posed with respect to $M$ if and only if there exists $\bar x\in X$ and an admissible function $c$ such that  
	\begin{equation}\label{key23}
		f(\bar x)+c(T_M(x,\bar x))\leq f(x)\quad\text{for all }x\in D_{\bar x}.
	\end{equation}
\end{theorem}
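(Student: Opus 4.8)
The plan is to prove the two implications separately; the reverse (``if'') direction is a direct verification, while the forward (``only if'') direction requires constructing a suitable admissible function, which is where the real work lies.

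For the reverse implication I would assume that some $\bar x\in A$ and an admissible $c$ satisfy (\ref{key23}) and check Definition \ref{wellposedd} directly. Since $T_M(\bar x,\bar x)=0$ we have $\bar x\in D_{\bar x}$, and because $c\ge 0$, (\ref{key23}) immediately gives $f(\bar x)\le f(x)$ for all $x\in D_{\bar x}$, so $\bar x$ is a global directional minimum on $D_{\bar x}$. For uniqueness, if $x\in D_{\bar x}$ with $f(x)=f(\bar x)$, then (\ref{key23}) forces $c(T_M(x,\bar x))=0$; applying condition (ii) of Definition \ref{admisd} to the constant sequence $t_k\equiv T_M(x,\bar x)$ yields $T_M(x,\bar x)=0$, whence $\|x-\bar x\|=0$ by Remark \ref{remark1}(iii), i.e.\ $x=\bar x$. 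Finally, for any sequence $(x_k)\subset D_{\bar x}$ with $f(x_k)\to f(\bar x)$, inequality (\ref{key23}) gives $0\le c(T_M(x_k,\bar x))\le f(x_k)-f(\bar x)\to 0$, so $c(T_M(x_k,\bar x))\to 0$, and admissibility of $c$ forces $T_M(x_k,\bar x)\to 0$; this is exactly condition (ii) of Definition \ref{wellposedd}.

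For the forward implication, given that (MP) is directional Tykhonov well-posed with distinguished point $\bar x$, I would define the candidate
\[
c(t):=\inf\{f(x)-f(\bar x)\mid x\in D_{\bar x},\ T_M(x,\bar x)\ge t\}\qquad (t\ge 0),
\]
with the convention $\inf\emptyset=+\infty$. Since $\bar x$ is the global directional minimum on $D_{\bar x}$, the quantity $f(x)-f(\bar x)$ is nonnegative there, so $c$ maps $[0,\infty)$ into $[0,\infty]$; moreover $\bar x\in D_{\bar x}$ gives $c(0)=0$. Inequality (\ref{key23}) is then automatic: for fixed $x\in D_{\bar x}$, the point $x$ itself belongs to the set defining $c(T_M(x,\bar x))$, so $c(T_M(x,\bar x))\le f(x)-f(\bar x)$.

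The step I expect to be the main obstacle is verifying that this $c$ is admissible, i.e.\ condition (ii) of Definition \ref{admisd}. I would argue by contradiction: if $c(t_k)\to 0$ but $t_k\not\to 0$, pass to a subsequence with $t_k\ge\delta>0$; since $c(t_k)\to 0$ we may assume each $c(t_k)$ is finite, so the defining set is nonempty and we can pick $x_k\in D_{\bar x}$ with $T_M(x_k,\bar x)\ge t_k\ge\delta$ and $f(x_k)-f(\bar x)<c(t_k)+1/k$. Then $0\le f(x_k)-f(\bar x)\to 0$, so $(x_k)$ is a minimizing sequence in $D_{\bar x}$ in the sense of Definition \ref{wellposedd}(ii), and well-posedness yields $T_M(x_k,\bar x)\to 0$, contradicting $T_M(x_k,\bar x)\ge\delta$. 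Hence $c$ is admissible and the construction is complete. The only delicate points are the handling of the convention $\inf\emptyset=+\infty$ (which keeps $c$ well defined with values in $[0,\infty]$ even at levels $t$ beyond the range of $T_M(\cdot,\bar x)$) and the observation $\bar x\in D_{\bar x}$ used for $c(0)=0$.
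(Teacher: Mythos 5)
Your proof is correct and takes essentially the same route as the paper: the reverse implication is the same direct verification, and in the forward direction you construct the admissible function as an infimum of $f(x)-f(\bar x)$ over a level set of $T_M(\cdot,\bar x)$ and verify admissibility by applying well-posedness to near-minimizing sequences. The only (harmless) difference is cosmetic: the paper uses the equality level set $\{x\in D_{\bar x}\mid T_M(x,\bar x)=t\}$ with default value $1$ when it is empty, whereas you use $\{x\in D_{\bar x}\mid T_M(x,\bar x)\ge t\}$ with $\inf\emptyset=+\infty$, which is fine since $c$ may take the value $+\infty$.
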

\begin{proof}
	If (MP) is Tykhonov well-posed with respect to $M$, then there exists $\bar x\in X$ such that 
	\begin{equation}\label{keynew8}
		f(\bar x) < f(x)\quad\text{for all }x\in D_{\bar x}\setminus \{\bar x\}.
	\end{equation} 
	Put $$D:=\{T_M(x,\bar x)\mid x\in D_{\bar x}\}.$$
	Clearly, $D\subset [0,\infty)$, and since $T_M(\bar x,\bar x)=0$, we have $0\in D$.
	For every $t\in  D$,  define the function $c$ by 
	\[
	c(t):=
	\inf\left\{f(y)-f(\bar x)\mid y\in D_{\bar x}\textrm{ and } T_M(y,\bar x)=t\right\}.
	\] 
	Then for every $x\in D_{\bar x}$, we have 
	$$\begin{array}{ll}
		c\left(T_M(x,\bar x)\right)&=\inf\left\{f(y)-f(\bar x)\mid y\in D_{\bar x}\textrm{ and } T_M(y,\bar x)=T_M(x,\bar x)\right\}\\
		&\leq f(x)-f(\bar x),
	\end{array}$$ 
	which implies that the inequality (\ref{key23}) holds.
	It remains to show that $c$ is an admissible function. Indeed, it follows from \eqref{keynew8} that $c(t)\geq 0$ for every $t\in D.$ Furthermore, we have
	\[
	0\leq c(0) = \inf\left\{f(y)-f(\bar x)\mid y\in D_{\bar x}\textrm{ and } T_M(y,\bar x)=0\right\}\leq f(\bar x)-f(\bar x)=0,
	\]
	which yields that condition (i) of Definition \ref{admisd} is fulfilled.
	To verify condition (ii), consider any sequence $(t_k)\subset D$ such that $c(t_k)\to 0$. Then it follows from the definition of $D$ and $c$ that for each $t_k$, there exists $x_k\in D_{\bar x}$ such that $T_M(x_k,\bar x)=t_k$ and $$0\leq f(x_k)-f(\bar x)<c(t_k)+\frac{1}{k}.$$ Since $c(t_k)+\frac{1}{k}\to 0$ as $k\to \infty$, we get $f(x_k)\to f(\bar x)$. Using the Tykhonov well-posedness with respect to $M$ of (MP) gives us $t_k=T_M(x_k,\bar x)\to 0$. This justifies (ii) of Definition \ref{admisd}. So, $c$ is an admissible function. 
	
	Conversely, if there are an $\bar x\in X$ and an admissible function $c$ satisfying $$f(\bar x)+c(T_M(x,\bar x))\leq f(x),$$ then we have
	\begin{equation} \label{key2}
		f(\bar x)\leq f(\bar x)+c(T_M(x,\bar x))\leq f(x) \textrm{ for all } x\in D_{\bar x}.
	\end{equation}
	This implies that $\bar x$ is global minimum point of $f$ with respect to $M$ on $D_{\bar x}$. To see the uniqueness of $\bar x$, we shall prove by contradiction. Assume that there exist $\bar y\in D_{\bar x}\setminus\{\bar x\}$ such that $f(\bar x)=f(\bar y)$. Then it follows from (\ref{key2}) that $$f(\bar x)+c(T_M(\bar y,\bar x))\leq f(\bar y),$$ and hence $c(T_M(\bar y, \bar x))=0$. Since $c$ is a admissible function, one has $T_M(\bar y,\bar x)=~0$ which yields $\bar y = \bar x$. Therefore, $\bar x$ is the unique global minimum point of $f$ with respect to $M$
	with respect to $M$ on $D_{\bar x}$. Moreover, if ($x_k)\subset D_{\bar x}$ is a sequence satisfying $f(x_k)\to f(\bar x)$, then $$c(T_M(x_k,\bar x))\leq(f(x_k)-f(\bar x))\to 0\text{ as }k\to\infty.$$ Using again the admission of $c$ gives $T_M(x_k,\bar x)\to 0$. Thus, problem (MP) is Tykhonov well-posed with respect to $M$.  The proof is complete.  
\end{proof}

Note that if $M=\uS$ then Theorem \ref{theo2} implies Theorem 12 in \cite{dontchev2006well} (for the case of Banach
spaces). 

The next example illustrates that usual global solutions do not exist for the problem, which means it is not Tykhonov well-posed in the classical sense. However, it may still have global solutions with respect to $M$, suggesting that it can be considered directional Tykhonov well-posed in those specific directions.

\begin{example}
	Let $X=C_{[0,1]}$ (the usual space of the real continuous functions on $[0, 1]$ with the maximum norm), and let $P$ be the set of all polynomials on $[0,1]$. Let $f\colon X\to \mathbb{R}$ defined by
	\[
	f(x) = \begin{cases}
		\max\limits_{s\in [0,1]} x(s) &\text{ if } x\in P, \\
		1 & \text{ if } x\not\in P.
	\end{cases}
	\]
	One can verify that $f$ is not bounded below and is not lower semicontinuous.
	
	Now, take $\bar x(s) \equiv 0$ for all $s\in[0,1]$ and $M = \{x(s)\equiv -2\}$. Then one has $$D_{\bar x} = \dom\, T_M(\cdot, \bar x) = \bar x - \cone (M) = \{x\mid x(s)\equiv \alpha \textrm{ for }\alpha\geq 0\}.$$ For convenience, we denote the constant polynomial $x(s)\equiv \alpha$ by $x_\alpha$ for all $\alpha\geq 0$. It is not hard to see that $f(x_\alpha) = \alpha$. For every $x_\alpha\in D_{\bar x}$, Proposition \ref{remark1}(ii) implies that there exists $u_\alpha\in M$ such that
	\[
	T_M(x_\alpha,\bar x) = \frac{\|x_\alpha - \bar x\|}{\|u_\alpha\|} = \frac{|\alpha-0|}{2} = \frac{\alpha}{2}.
	\]
	
	Next, let $c : D \to [0,\infty)$ be the identity map, i.e.,
	\[
	c(t) = t \quad \text{for all } t \in D,
	\]
	where $D = \{ T_M(x,\bar x) \mid x \in D_{\bar x} \}=[0,\infty).$ Clearly, $c$ is an admissible function. Moreover, for $x_\alpha \in D_{\bar x}$ we have
	\[
	f(\bar x) + c\!\left(T_M(x_\alpha,\bar x)\right) 
	= 0 + \tfrac{\alpha}{2} \leq \alpha = f(x_\alpha),
	\]
	which shows that inequality~\eqref{key23} holds. Therefore, Theorem~\ref{theo2} ensures that problem (MP) is Tykhonov well-posed with respect to $M$.
\end{example}

\subsection{Well-posedness under convexity assumptions}       	

Similar to results in \cite{durea2017new,long2022invariant,long2023directional}, we first present a general Ekeland variational principle in which
usual distance functions are replaced by minimal time functions mentioned in the previous sections. This principle will be an important tool in what
follows.

\begin{theorem}\label{ekel0}
	Let $f$ be as in the problem (MP). Suppose that $f$ is a lower semicontinuous function on $X$ and suppose further that there exists $\bar x \in X$ such that $f$ is bounded from below on $D_{\bar x}$. Then for every $\epsilon>0$ and every $x_0\in \mL(\bar x,\epsilon)=\{y\in D_{\bar x}\mid f(y)\leq \inf_{x\in D_{\bar x}} f(x)+\epsilon \}$, there exists $x_\epsilon\in D_{\bar x}$ such that the following inequalities hold:
	\begin{enumerate}
		\item [{\rm (i)}] $T_M(x_0,x_\epsilon) \leq \sqrt{\epsilon}$,
		\item [{\rm (ii)}] $f(x_\epsilon)+\sqrt{\epsilon} T_M(x_0,x_\epsilon)\leq f(x_0)$, and
		\item [{\rm (iii)}] $f(x_\epsilon) \leq f(y)+\sqrt{\epsilon} T_M(x_\epsilon,y)$ for all $y\in D_{\bar x}$.
	\end{enumerate}
\end{theorem}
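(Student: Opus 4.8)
The plan is to mimic the classical proof of Ekeland's variational principle, but carried out entirely inside the ``directional slice'' $D_{\bar x}=A\cap\dom T_M(\cdot,\bar x)$, using the minimal time function $T_M$ in place of the norm distance. The key structural fact that makes this work is Lemma \ref{lemma2.2} together with Lemma \ref{long2023directional}: if $y\in D_{\bar x}$ then $\dom T_M(\cdot,y)\subset\dom T_M(\cdot,\bar x)$, so $D_y\subset D_{\bar x}$, and $T_M$ obeys the triangle inequality on these sets. This means the partial order
\[
z\preceq y \quad:\Longleftrightarrow\quad f(z)+\sqrt{\epsilon}\,T_M(z,y)\le f(y)
\]
is transitive on $D_{\bar x}$ (transitivity uses Lemma \ref{long2023directional}), reflexive (since $T_M(y,y)=0$), and antisymmetric whenever $f$ is bounded below. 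So the proof reduces to producing a $\preceq$-minimal element below $x_0$.

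First I would set $S_0=\{x_0\}$ and, recursively, given $x_n\in D_{\bar x}$, let $S_n=\{y\in D_{\bar x}\mid y\preceq x_n\}$ (note $S_n\subset D_{\bar x}$, which is where Lemma \ref{lemma2.2} is invoked, since any $y\preceq x_n$ lies in $\dom T_M(\cdot,x_n)\subset\dom T_M(\cdot,\bar x)$), and choose $x_{n+1}\in S_n$ with
\[
f(x_{n+1})\le \inf_{y\in S_n}f(y)+\tfrac{1}{2^{n+1}}.
\]
Then $S_{n+1}\subset S_n$: indeed $y\preceq x_{n+1}$ and $x_{n+1}\preceq x_n$ give $y\preceq x_n$ by transitivity. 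By lower semicontinuity of $f$ and closedness of $A$ (hence of $D_{\bar x}$ by Remark \ref{remark1}(ii)), each $S_n$ is closed. The triangle inequality gives, for $y\in S_{n+1}$, $\sqrt{\epsilon}\,T_M(y,x_{n+1})\le f(x_{n+1})-f(y)\le \inf_{S_n}f+2^{-(n+1)}-\inf_{S_{n+1}}f\le 2^{-(n+1)}$ (using $\inf_{S_n}f\le\inf_{S_{n+1}}f$ and $f(y)\ge\inf_{S_{n+1}}f$), so $\diam_{T_M}(S_{n+1})\to 0$; converting via Remark \ref{remark1}(iii) to the norm, $\diam(S_n)\to 0$, and Cantor's intersection theorem yields $\bigcap_n S_n=\{x_\epsilon\}$ for some $x_\epsilon\in D_{\bar x}$.

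It then remains to read off (i)--(iii). Since $x_\epsilon\in S_0$ we have $x_\epsilon\preceq x_0$, i.e.\ $f(x_\epsilon)+\sqrt{\epsilon}\,T_M(x_0,x_\epsilon)\le f(x_0)$, which is (ii); combined with $f(x_0)\le\inf_{D_{\bar x}}f+\epsilon\le f(x_\epsilon)+\epsilon$ this forces $\sqrt{\epsilon}\,T_M(x_0,x_\epsilon)\le\epsilon$, giving (i). For (iii): take any $y\in D_{\bar x}$ with $y\preceq x_\epsilon$; then $y\preceq x_n$ for all $n$ (transitivity, since $x_\epsilon\in S_n$ means $x_\epsilon\preceq x_n$), so $y\in\bigcap S_n=\{x_\epsilon\}$, hence $y=x_\epsilon$ — that is, $x_\epsilon$ is $\preceq$-minimal in $D_{\bar x}$, which is exactly (iii). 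The main obstacle I anticipate is bookkeeping around domains: one must check at every stage that the comparison elements actually lie in $D_{\bar x}$ so that $T_M$ is finite and Remark \ref{remark1}(iii) applies (turning $T_M$ into the norm so Cantor's theorem is usable), and that transitivity of $\preceq$ genuinely needs the convexity of $\cone(M)$ through Lemma \ref{long2023directional}; a careless argument would silently leave $D_{\bar x}$ when comparing $y\preceq x_\epsilon$ to the $x_n$'s.
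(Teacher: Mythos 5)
Your strategy is the paper's own: iterate a near-infimal selection inside the descent sets determined by $f(\cdot)+\sqrt{\epsilon}\,T_M$, use the triangle inequality (Lemma \ref{long2023directional}) for transitivity/nestedness, extract the limit point by completeness (you via Cantor's theorem on the nested closed sets $S_n$, the paper by showing the sequence $(x_k)$ is Cauchy), and obtain (iii) from minimality of the limit. The genuine problem is the orientation of your order. As you define it, $z\preceq y$ iff $f(z)+\sqrt{\epsilon}\,T_M(z,y)\le f(y)$; since $T_M(z,y)<\infty$ exactly when $y\in z+\cone(M)$, your descent moves in the $-\cone(M)$ direction, and carried through consistently it yields $f(x_\epsilon)+\sqrt{\epsilon}\,T_M(x_\epsilon,x_0)\le f(x_0)$ and ``$f(x_\epsilon)\le f(y)+\sqrt{\epsilon}\,T_M(y,x_\epsilon)$ for all $y\in D_{\bar x}$'', i.e.\ (i)--(iii) with the arguments of $T_M$ reversed. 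Because $T_M$ is asymmetric (finite only on a directional domain), these are different statements: they compare $x_\epsilon$ with the points $y$ satisfying $x_\epsilon\in y+\cone(M)$, whereas the theorem requires the comparison over $y\in x_\epsilon+\cone(M)$. Your read-off, where you suddenly write $T_M(x_0,x_\epsilon)$ and call $\preceq$-minimality ``exactly (iii)'', silently swaps the arguments and contradicts your own definition; note that your intermediate estimate $\sqrt{\epsilon}\,T_M(y,x_{n+1})\le f(x_{n+1})-f(y)$ and your domain remark both use the literal definition, so this is a real inconsistency, not a single typo. To prove the theorem as stated you must use the reversed order $z\preceq y$ iff $f(z)+\sqrt{\epsilon}\,T_M(y,z)\le f(y)$, i.e.\ exactly the paper's sets $F(x)=\{y\in D_{\bar x}\mid f(y)+\sqrt{\epsilon}\,T_M(x,y)\le f(x)\}$.

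Once the orientation is corrected, two pieces of your bookkeeping need repair. Your justification that $S_n\subset D_{\bar x}$ via Lemma \ref{lemma2.2} no longer applies: with the correct orientation, $y\preceq x_n$ gives $y\in x_n+\cone(M)$, which need not lie in $\bar x-\cone(M)$; membership in $D_{\bar x}$ must simply be imposed in the definition of $S_n$ (which you, like the paper, in fact do), and Lemma \ref{lemma2.2} plays no role at this point. Moreover, closedness of $S_n$ requires, besides lower semicontinuity of $f$ and closedness of $D_{\bar x}$, lower semicontinuity of $y\mapsto T_M(x_n,y)$; the paper covers this by citing Lemma 2.1(ii) of \cite{long2022invariant}. (Minor: antisymmetry of $\preceq$ does not need $f$ bounded below; add the two inequalities and use Remark \ref{remark1}(iii).) With these fixes your argument becomes essentially the paper's proof.
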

\begin{proof} 
	Using the same arguments as in the proof of \cite[Proposition 3.3]{long2023directional}, we can deduce the conclusions from \cite[Theorem 3.1]{long2023directional}. 
\end{proof}

The following example illustrates that Theorem \ref{ekel0} can be viewed as a directional Ekeland variational principle.

\begin{example}\label{ex43}
	Let $X=C_{[0,1]}$ (the space of the real continuous functions on $[0, 1]$ with the maximum norm), and let $f\colon X\to \mathbb{R}$ defined by
	\[
	f(x) = 
	\min_{s\in [0,1]} x(s)    \]
	We can see that $f$ is continuous but is not bounded from below on $X$. Thus, a number of recent existing results are out of use.
	
	Now, take $\bar x(s) =s^2 $ for all $s\in[0,1]$ and $M = \{x\equiv -1\}$. Then one has $$D_{\bar x} = \dom\, T_M(\cdot, \bar x) = \bar x - \cone (M) = \{\bar x+\alpha\mid \alpha\geq 0\}.$$ 
	It is easy to see that $f$ is bounded from below on $D_{\bar x}$.
	Fixing any $\epsilon>0$ and  $x_0\in \mL(\bar x,\epsilon)=\{y\in D_{\bar x}\mid f(y)\leq \inf_{x\in D_{\bar x}} f(x)+\epsilon \}$, by Theorem \ref{ekel0}, there exists $x_\epsilon\in D_{\bar x}$ such that the inequalities in this theorem hold. 
\end{example}

\begin{remark} \label{vdkhac0} In Example \ref{ex43}, it should be emphasized that $(D_{\bar x}, T_M)$ does not define a metric space. In fact, take $x=\bar x+1$ and $y=\bar x+2$ in $D_{\bar x}$. Then by the definition of the minimal time function \eqref{minimaltime}, we have
	
	$$T_M(y,x)=\inf\{t\geq 0\mid \{\bar x+2- t\}\cap \{\bar x+1\}\neq \emptyset\}=1,$$
	and
	$$T_M(x,y)=\inf\{t\geq 0\mid \{\bar x+1- t\}\cap \{\bar x+2\}\neq \emptyset\}=\infty,$$
	where we use the convention that $\inf\emptyset=\infty.$ Thus $T_M(y,x)\neq T_M(x,y)$, which shows that the minimal time function $T_M$ is not symmetric, and hence cannot serve as a true distance function.
\end{remark}

Under additional assumptions on the data of problem (MP), we obtain the following characterization of Tykhonov well-posedness with respect to $M$.

\begin{theorem}\label{wpchar1} Consider the minimal time function \eqref{minimaltime} and the problem (MP). For every $\epsilon >0$, define
	$$ \mG(\epsilon):= \{x\in X\mid f(x)\leq f(y) + \epsilon T_M(y,x) \text{ for all }y\in D_x\}.$$
	Assume that $f$ is lower semicontinuous and convex. 
	Then (MP) is Tykhonov well-posed with respect to $M$ if and only if there exists $\bar x\in \bigcap_{\epsilon > 0}\mG(\epsilon)$ such that $f$ is bounded from below on $D_{\bar x}$ and $$\diam(\mG'(\bar x,\epsilon))\to 0\quad\text{as } \epsilon\to 0,$$ where
	$$\mG'(\bar x,\epsilon):= \{x\in D_{\bar x}\mid  f(x)\leq f(y)+\epsilon T_M(x,y) \text{ for all } y\in D_{\bar x}\}.$$
\end{theorem}
\begin{proof}
	Suppose that problem (MP) is directional Tykhonov well-posed with respect to~$M$. Then there exists $\bar x$ such that for all $y\in D_{\bar x}$ and all $\epsilon>0$, $$f(\bar x)\leq f(y) \leq f(y)+\epsilon T_M(y,\bar x),$$  which implies $\bar x\in \bigcap_{\epsilon > 0}\mG(\epsilon)$ and $f$ is bounded from below on $D_{\bar x}$. Next, we prove that $$\diam (\mG'(\bar x,\epsilon))\to 0\text{ as }\epsilon \to 0.$$ 
	Suppose on the contrary that there exist a sequence $(\epsilon_k)$ converging to $0$ and a constant $a>0$ such that $\diam(\mG'(\bar x, \epsilon_k))>2a$.
	Then one can find a sequence $(x_k)\subset \mG'(\bar x,\epsilon_k)$ satisfying 
	\begin{equation}\label{keynew2}
		\|x_k-\bar x\|\geq a
	\end{equation}
	and
	\begin{equation}\label{9}
		f(x_k)-f(\bar x)\leq \epsilon_k T_M(x_k,\bar x) \text{ for all }k\in\N.
	\end{equation}
	By Proposition \ref{remark1}(ii), for every $k\in \mathbb N$, there exists $u_k\in M$ such that
	\begin{equation}\label{keynew1}
		T_M(x_k,\bar x)= \frac{\|x_k-\bar x\|}{\|u_k\|}\geq \frac{a}{\|M\|}>0,
	\end{equation}
	where the inequality is fulfilled by \eqref{keynew2} and the definition of $\|M\|.$
	This ensures that the following set is nonempty:
	$$C:=\left\{x\in D_{\bar x}\mid T_M(x,\bar x)\geq \frac{a}{\|M\|}\right\}.$$
	Since (MP) is directional Tykhonov well-posed with respect to $M$, it follows that
	\begin{equation}\label{key6}
		m:= \inf\left\{f(x)-f(\bar x)\mid x\in C\right\}>0.
	\end{equation}
	Next, we show that 
	\begin{equation}\label{6}
		\frac{m}{a}T_M(x_k,\bar x) \leq f(x_k)-f(\bar x)\text{ for all } k\in \mathbb{N}.
	\end{equation} 
	Indeed, we first obtain from \eqref{keynew1} that
	\begin{equation}\label{key7}
		\lambda:= \frac{a}{\|u_k\|T_M(x_k,\bar x)}\leq1.
	\end{equation}
	Since $D_{\bar x}$ is convex, we have $\lambda  x_k + \left(1-\lambda \right)\bar x \in D_{\bar x}$ . Then using Proposition \ref{remark1}(ii), (\ref{key7}) and \eqref{keynew1} give us
	$$		T_M(\lambda  x_k + (1-\lambda )\bar x,\bar x)\geq
	\frac{\|\lambda  x_k + (1-\lambda )\bar x - \bar x\|}{\|M\|}
	= \frac{\lambda\|  x_k -\bar x\|}{\|M\|}=\frac{a}{\|M\|}.$$
	Consequently,
	\begin{equation}\label{8}
		\lambda  x_k + (1-\lambda )\bar x\in C.
	\end{equation}
	On the other hand, it follows from the convexity of $f$ that  
	$$f\left(\lambda  x_k + \left(1-\lambda \right)\bar x\right) - f(\bar x) \leq \lambda (f(x_k)-f(\bar x)).$$
	Combining this with (\ref{8}) and (\ref{key6}), we obtain
	\[
	m\leq \lambda (f(x_k)-f(\bar x)),
	\]
	which implies, by (\ref{key7}) and $\|u_k\|\geq 1,$ that the inequality (\ref{6}) holds. From (\ref{9}) and \eqref{6}, we get 
	$$	\frac{m}{a}T_M(x_k,\bar x) \leq f(x_k)-f(\bar x)\leq \epsilon_k T_M(x_k,\bar x)\text{ for all } k\in \mathbb{N}.$$
	Since $T_M(x_k,\bar x)>0$ (by \eqref{keynew1}), it follows that
	$$0<\frac{m}{a}\leq \epsilon_k\quad\text{for all }k\in\mathbb{N}.$$ 
	This is impossible since $\epsilon_k\to 0$ as $k\to 0$. Therefore, we conclude that $$\diam(\mG'(\bar x,\epsilon))\to 0\text{ as }\epsilon\to 0.$$
	
	Conversely, assume that there exists $\bar x\in \bigcap_{\epsilon > 0}\mG(\epsilon)$ such that $f$ is bounded from below on $D_{\bar x}$ and $$\diam(\mG(\bar x,\epsilon))\to~0\quad\text{as }\epsilon\to 0.$$ 
	First, since $\bar x\in \mG(\epsilon)$ for all $\epsilon >0$, we have
	\[
	f(\bar x)\leq f(y) + \epsilon T_M(y,\bar x)\quad \textrm{ for all } y\in D_{\bar x} \textrm{ and } \epsilon > 0.
	\]
	Letting $\epsilon\to 0$ gives
	\[
	f(\bar x)\leq f(y) \textrm{ for all }y\in D_{\bar x}.
	\]
	Therefore, $\bar x$ is is a minimizer of $f$ with respect to $M$ on $D_{\bar x}$.
	Next, we show that $$\mL(\bar x, \epsilon)\subset \left(\mG' (\bar x,\sqrt{\epsilon})\right)^{\sqrt{\epsilon}},$$ where $\mL(\bar x, \epsilon)$ is defined as in \eqref{eqthm1} and $$\left(\mG' (\bar x,\sqrt{\epsilon})\right)^{\sqrt{\epsilon}}:=\{x\in D_{\bar x}\mid T_M(x,\mG' (\bar x,\sqrt{\epsilon}))\leq \sqrt{\epsilon}\}.$$ 
	Indeed, picking any $x_0\in \mL(\bar x,\epsilon)$ we have $$f(x_0)\leq \inf_{x\in D_{\bar x}}f(x) + \epsilon = f(\bar x)+\epsilon.$$ 
	Applying Theorem \ref{ekel0}, there exists~$x_\epsilon\in D_{\bar x}$ such that 
	\begin{equation}\label{keynew3}
		T_M(x_0,x_\epsilon)\leq \sqrt{\epsilon}	
	\end{equation}
	and
	\begin{equation}\label{keynew4}
		f(x_\epsilon)\leq f(y) + \sqrt{\epsilon}T_M(x_\epsilon,x) \textrm{ for all }y\in D_{\bar x}.		
	\end{equation}
	Clearly, the relation \eqref{keynew4} implies that $x_\epsilon\in \mG'(\bar x,\sqrt{\epsilon})$. Combining this with \eqref{keynew3} gives  
	$$T_M\big(x_0,\mG'(\bar x,\sqrt{\epsilon})\big)\leq T_M(x_0,x_\epsilon)\leq \sqrt{\epsilon}.$$
	Therefore, we obtain $\mL(\bar x, \epsilon)\subset \left(\mG'(\bar x,\sqrt{\epsilon})\right)^{\sqrt{\epsilon}}$. By the imposed assumptions, it follows from Proposition \ref{lemmato0} that $\diam\left(\big(\mG'(\bar x,\sqrt{\epsilon})\big)^{\sqrt{\epsilon}}\right)\to 0$, which in its turn implies $$\diam\big(\mL(\bar x,\epsilon)\big)\to 0\text{ as }\epsilon\to 0.$$ 
	According to Theorem \ref{theo1}, problem (MP) is Tykhonov well-posed with respect to $M$. This completes the proof.
	
\end{proof}

\begin{remark}
	(i)	It is interesting to observe in general that 
	$$\lim_{\epsilon \to 0}\diam(\mG(\epsilon))\neq \lim_{\epsilon \to 0}\diam(\mG(\bar x, \epsilon))\text{ for every }\bar x\in \bigcap_{\epsilon >0}\mG(\epsilon)$$
	Indeed, consider $f: \mathbb R\to \R$ is given by $f(x)=e^x$. Take $M=\{-1\}$. Since $f$ is strictly increase, for every $x\in\R$ we get $$f(x)\leq f(y)\leq f(y)+\epsilon T_M(y,x)\text{ for all } y\in D_x=[x,\infty),$$
	which yields
	$\mG(\epsilon)=\R$ for any $\epsilon>0$. While if we take any $\bar x\in \bigcap_{\epsilon >0}\mG(\epsilon)$, then direct calculations yield
	$$\lim_{\epsilon \to 0}\diam(\mG(\bar x, \epsilon))=0.$$
	
	(ii) If $M=\mathbb{S}$, we get $T_M(x,y) = T_M(y,x) = \|x-y\|$ and $D_{\bar x} = X$. Hence, 
	\[
	\mG(\epsilon) = \mG'(\bar x,\epsilon) = \{x\in X\mid f(x)\leq f(y) + \epsilon\|x-y\|\text{ for all } y\in X\}.
	\] So, our Theorem \ref{wpchar1} implies Theorem 4.1 in \cite{lucchetti1981characterization} (for the case of a Banach space).
\end{remark}

The following example gives a case where Theorem \ref{wpchar1} can be applied, while a number
of earlier results cannot.

\begin{example} \label{ex1}
	Let $X=\ell^1= \{(x^{(1)},x^{(2)},...,x^{(i)},...) \mid x^{(i)} \in \R,\;	\sum_{i=1}^{\infty}|x^{(i)}|<\infty\}$ and let $f : X \to\R$ be defined by
	\[
	f(x) = \sum_{n=1}^{\infty}\frac{|\langle x,e_n\rangle|}{n},
	\]
	where $\langle\cdot,\cdot\rangle$ is the usual scalar product and $e_n = (0, 0,\cdots, 1, 0,\cdots),$ 1 at the $n$-th position, is the standard basis. Observe that the function $f$ is convex and continuous, and the
	problem (MP) has a unique solution at $ 0.$ But $(e_n)$ is a minimizing sequence which does not converge to $0.$ Thus, (MP) is not classically Tykhonov well-posed.
	
	Now let $M=\{(2,0,\ldots)\}$ and $\bar x = 0.$ Since $\bar x$ is a global minimum of $f$ on $X$, for any $\epsilon >0$, one has
	$$f(\bar x)\leq f(y)\leq f(y)+\epsilon T_M(y,\bar x)\quad\text{ for all }y\in D_{\bar x}.$$
	This implies that $$\bar x\in \bigcap_{\epsilon >0}\mG(\epsilon).$$ Clearly, $f$ is bounded from below on $D_{\bar x} = \{(-\alpha,0,\cdots)\in \ell^1\mid \alpha\geq 0\}$. To apply Theorem \ref{wpchar1}, it remains to show that $\diam (\mG'(0,\epsilon))\to 0$ as $\epsilon\to 0$. For any $\epsilon>0$ and $x:=(-\alpha_x,0,\cdots)\in D_{\bar x}$, if $x\in \mG'(0,\epsilon)$, then 
	\begin{equation}\label{eq3}
		\alpha_x=\sum_{n=1}^{\infty}\frac{|\langle x,e_n\rangle|}{n}=f(x)\leq f(y)+\epsilon T_M(x,y) \quad \textrm{ for all } y\in D_{\bar x}.
	\end{equation}
	Taking $y=0$ givess $f(y)=f(0)=0$ and
	\begin{equation}\label{keynew5}
		T_M(x,0)<\infty\quad  (\text{because }x\in D_{\bar x}=D_0).
	\end{equation}
	Then, (\ref{eq3}) becomes
	\[
	\alpha_x\leq \epsilon T_M(x,0)\quad\text{for all } x\in \mG'(0,\epsilon).
	\]
	By Proposition \ref{remark1}(ii), we have $2 T_M(x,0)=\|x-0\| = \alpha_x$, and hence the above inequality implies $$\diam (\mG'(0,\epsilon))\leq 2\epsilon T_M(x,0).$$ Combining this with \eqref{keynew5}, we obtain 
	$$\diam (\mG'(0,\epsilon))\to 0\quad\text{ as }\epsilon\to 0.$$ 
	Therefore, the sufficient conditions of Theorem~\ref{wpchar1} are fulfilled. By this theorem, problem (MP) is Tykhonov well-posed with respect to $M$.
\end{example}


\section{Relationships between level sets and admissible functions} \label{secrelasionship}

As seen in the previous section, level sets and admissible functions play a vital role in studying characterizations for the Tykhonov well-posedness with respect to $M$ of optimization problems. Thus, it is interesting to investigate properties and establish relationships between these level sets and admissible functions.

\begin{definition}\label{subhomo} 
	Let $g\colon X\to \mathbb{R}$ and $\bar x\in X$. The function $g$ is called \textit{subhomogeneous with respect to} $\bar x$ on $D_{\bar x}$ if for every $x\in D_{\bar x}$,
	\begin{equation}\label{subhomo1}
		g(sx +(1-s)\bar x)\leq sg(x) +(1-s)g(\bar x) \textrm{ for all } s\in [0,1].
	\end{equation}
	
\end{definition}

Consider the problem (MP) and the minimal time function \eqref{minimaltime}. Throughout this section, we impose the following assumptions:
\begin{enumerate}
	\item [{\rm (i)}] $M$ is a subset of the unit sphere, that is, $M \subset \mathbb S$.
	\item [{\rm (ii)}] $\bar x\in X$ is a global minimum point of $f$ with respect to $M$ on $D_{\bar x}$.
	\item [{\rm (iii)}] $f$ is subhomogeneous with respect to $\bar x$ on $D_{\bar x}$, which is a weaker condition compared to convexity.
\end{enumerate}

\begin{remark}
	If $\bar x = 0$, $f(0) = 0$ and $M=\uS$, then (\ref{subhomo1}) becomes $f(sx)\leq sf(x)$ for all $s\in[0,1]$ and $x\in X$, then $f$ is called subhomogeneous with respect to $0$; see \cite[pp. 463]{lucchetti1981characterization}.
\end{remark}

For $\epsilon >0$, denote by	$$\mL(\bar x,\epsilon):=\left\{y\in D_{\bar x}\mid f(y)\leq \inf_{x\in D_{\bar x}}f(x)+\epsilon\right\},$$ 
the level set defined in Theorem \ref{theo1}, and put
\[
r(\epsilon) = \frac{1}{2}\diam (\mL(\bar x,\epsilon)).
\]

For every $t\in [0,\infty),$ define
\[
c_0(t) = \inf\{f(x)-f(\bar x)\mid x\in D_{\bar x}\text{ and } T_M(x,\bar x)=t\}.
\]

\begin{remark} 	Under the additional assumption on $M$,	it follows from Proposition \ref{pronew} that 
	$$\{T_M(x,\bar x)\mid x\in D_{\bar x}\}=[0,\infty).$$
	Therefore, $c_0(t)<\infty$ for all $t\in [0,\infty).$
	Moreover, since $\bar x$ is a global minimum point of $f$ with respect to $M$ on $D_{\bar x}$, we clearly have  $c_0(t)\geq 0$ for all $t\geq0$, and in particular $$c_0(0)=f(\bar x)-f(\bar x)=0.$$ 
\end{remark}

We now proceed to establish an elementary estimate for the function $c_0$.

\begin{proposition} \label{theo3} Let $t_0> 0$. Then for every $s\in [0,t_0]$, we have 
	\begin{equation}\label{estimate1}
		c_0(s)\leq \frac{s}{t_0}c_0(t_0).
	\end{equation}
\end{proposition}
\begin{proof}
	We have by Proposition \ref{pronew} and the definition of $c_0$ that for every $\epsilon >0$, there exists an $x_\epsilon\in D_{\bar x}$ such that 
	\begin{equation}\label{keynew29}
		T_M(x_\epsilon,\bar x)=t_0
	\end{equation}
	and
	\begin{equation}\label{key8}
		f(x_\epsilon) - f(\bar x)\leq c_0(t_0)+\epsilon.    
	\end{equation}
	Now setting $x'_\epsilon:= \frac{s}{t_0}x_\epsilon + \left(1-\frac{s}{t_0}\right)\bar x$ gives $x'_\epsilon\in D_{\bar x}$ (since $D_{\bar x}$ is convex). Thus, it follows from Proposition \ref{remark1}(ii) and (\ref{keynew29}) that 
	$$ \|    T_M(x'_\epsilon,\bar x)\|=\|x'_\epsilon-\bar x\|=\left\|\frac{s}{t_0}x_\epsilon + \left(1-\frac{s}{t_0}\right)\bar x-\bar x\right\|=s.
	$$
	Using again the definition of $c_0$, we get
	\begin{equation}\label{eq44}
		c_0(s)\leq f(x'_\epsilon) - f(\bar x) = f\left(\frac{s}{t_0}x_\epsilon + \left(1-\frac{s}{t_0}\right)\bar x\right) - f(\bar x).    
	\end{equation}
	On the other hand, since $f$ is subhomogeneous with respect to $\bar x$ on $D_{\bar x}$, it follows from (\ref{key8}) that  for all $\epsilon >0$,
	\begin{equation*}
		f\left(\frac{s}{t_0}x_\epsilon + \left(1-\frac{s}{t_0}\right)\bar x\right) - f(\bar x)\leq \frac{s}{t_0}\Big(f(x_\epsilon)-f(\bar x)\Big)\leq \frac{s}{t_0}\Big(c_0(t_0)+\epsilon \Big).   
	\end{equation*}
	Combining this with (\ref{eq44}) gives us $$c_0(s)\leq \frac{s}{t_0}(c_0(t_0)+\epsilon).$$ Since $\epsilon$ could be taken arbitrarily small, we obtain the desired inequality \eqref{estimate1}, which therefore completes the proof of the proposition.
\end{proof}

\begin{remark}
	As noted in Remark~\ref{vdkhac} and Example \ref{vdratkhac}, one may have $$T_M(x,\bar x) \neq T_M(\bar x,x).$$ 
	Therefore, Proposition~\ref{theo3} generalizes Proposition~5.1 of \cite{lucchetti1981characterization} 
	from the usual distance to the directional minimal time function with respect to a subset of the unit sphere (for the case of Banach spaces).
\end{remark}

The following facts are useful in what follows.

\begin{remark}\label{adm1}
	If $0<c_0(t_0)$ for some $t_0>0$, then $c_0$ is a strictly increasing function on $[t_0,\infty)$. Indeed,
	for any $a,b\in [t_0,\infty)$ satisfying $a<b$, Proposition \ref{theo3} tells us that
	\[
	c_0(a)\leq \frac{a}{b}c_0(b) \text{ and } 0<c_0(t_0)\leq \frac{t_0}{b}c_0(b),
	\]
	and hence $c_0(b)>0$. Moreover, since $\frac{a}{b}<1$ we get
	\[
	c_0(a)\leq \frac{a}{b}c_0(b)<c_0(b).
	\]
\end{remark}

\begin{remark}\label{adm2}
	If $c_0(t)>0$ for all $t>0$, then $c_0$ is an admissible function. Indeed, we only need to show that if $c_0(t_k)\to 0$, then $t_k\to 0$. Suppose for contradiction that $t_k\not\to 0$. Without loss of generality we can assume that $t_{k}>\epsilon$ for some $\epsilon>0$. By Proposition \ref{theo3}, we have $c_0(\epsilon)\leq \frac{t_{k}}{\epsilon}c(t_{k})$. Moreover, since $c_0(t_k)\to 0$ as $k\to \infty$, and by the assumption one has $0<c_0(\epsilon)$. Then it follows from Remark~\ref{adm1} that $0<c_0(\epsilon)< c_0(t_{k})$, which is impossible as $c_0(t_k)\to 0$. So $c_0$ is admissible whenever $c_0(t)>0$ for all $t>0$. 
\end{remark}

The next theorem provides an estimate for $r(c_0)$, extending Propositions~5.5 and~5.8 of \cite{lucchetti1981characterization} to the framework of directional minimal time functions in Banach spaces.

\begin{theorem} \label{level1}
	For every $t>0$, if $0<c_0(t)$ then following inequalities hold:
	\begin{equation}\label{eq11}
		\frac{t}{2}\leq r(c_0(t))\leq t.
	\end{equation}        
\end{theorem}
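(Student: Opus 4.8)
The plan is to reduce both inequalities in \eqref{eq11} to controlling the quantity $T_M(y,\bar x)$ for points $y$ in the level set $\mL(\bar x,c_0(t))$. First I would record two elementary observations. Since $\bar x$ is a global directional minimum point for $f$ on $D_{\bar x}$, we have $\inf_{x\in D_{\bar x}}f(x)=f(\bar x)$, so $\mL(\bar x,\epsilon)=\{y\in D_{\bar x}\mid f(y)-f(\bar x)\le\epsilon\}$ and in particular $\bar x\in\mL(\bar x,\epsilon)$ for every $\epsilon>0$; and by Remark \ref{remark1}(iii) every $y\in D_{\bar x}\subset\dom T_M(\cdot,\bar x)$ satisfies $T_M(y,\bar x)=\|y-\bar x\|$. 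Consequently, the desired bound $r(c_0(t))\ge t/2$ is equivalent to $\diam(\mL(\bar x,c_0(t)))\ge t$, and $r(c_0(t))\le t$ is equivalent to $\diam(\mL(\bar x,c_0(t)))\le 2t$. Here I take $c_0(t)$ to be finite; note that Proposition \ref{theo3} with $t_0=t$ already gives $[0,t]\subset\dom(c_0)$, which is what is needed below.

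For the lower bound I would fix $s\in[0,t)$ and use Proposition \ref{theo3} to get $c_0(s)\le\frac{s}{t}c_0(t)<c_0(t)$, where the strict inequality uses $s<t$ together with $c_0(t)>0$. Since then $c_0(s)<\infty$, the infimum defining $c_0(s)$ is taken over a nonempty set, so I can choose $x_s\in D_{\bar x}$ with $T_M(x_s,\bar x)=s$ and $f(x_s)-f(\bar x)\le c_0(s)+\big(1-\tfrac{s}{t}\big)c_0(t)\le\frac{s}{t}c_0(t)+\big(1-\tfrac{s}{t}\big)c_0(t)=c_0(t)$. Thus $x_s\in\mL(\bar x,c_0(t))$ and $\|x_s-\bar x\|=T_M(x_s,\bar x)=s$. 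Combined with $\bar x\in\mL(\bar x,c_0(t))$, this yields $\diam(\mL(\bar x,c_0(t)))\ge\|x_s-\bar x\|=s$ for every $s<t$; letting $s\uparrow t$ gives $\diam(\mL(\bar x,c_0(t)))\ge t$, i.e.\ $r(c_0(t))\ge t/2$.

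For the upper bound it suffices to show $T_M(y,\bar x)\le t$ for every $y\in\mL(\bar x,c_0(t))$: then for any two points $y,z$ in this level set the triangle inequality gives $\|y-z\|\le\|y-\bar x\|+\|\bar x-z\|=T_M(y,\bar x)+T_M(z,\bar x)\le 2t$, hence $\diam(\mL(\bar x,c_0(t)))\le 2t$. To prove the pointwise claim, suppose $y\in\mL(\bar x,c_0(t))$ and put $\tau:=T_M(y,\bar x)$; assume for contradiction $\tau>t$. Then $y$ is a competitor in the infimum defining $c_0(\tau)$, so $c_0(\tau)\le f(y)-f(\bar x)\le c_0(t)$, while Proposition \ref{theo3} with $t_0=\tau$, $s_0=t$ gives $c_0(t)\le\frac{t}{\tau}c_0(\tau)\le\frac{t}{\tau}c_0(t)$. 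Since $\tau>t$, this forces $c_0(t)\le 0$, contradicting $c_0(t)>0$; hence $\tau\le t$.

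I do not expect a genuine obstacle: the whole argument is a two-sided application of the homogeneity-type estimate of Proposition \ref{theo3}, one direction pushing test points outward (for the lower bound) and the other excluding points that are too far out (for the upper bound). The only delicate points are that the infimum defining $c_0$ need not be attained — handled above by working with $s<t$ and absorbing the controlled slack $\big(1-\tfrac{s}{t}\big)c_0(t)$ — and the tacit finiteness of $c_0(t)$, which Proposition \ref{theo3} makes harmless on $[0,t]$.
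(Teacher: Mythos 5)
Your proof is correct and follows essentially the same route as the paper: the lower bound produces points of $\mL(\bar x,c_0(t))$ at distance arbitrarily close to $t$ from $\bar x$ via the homogeneity estimate (you invoke Proposition \ref{theo3} directly, while the paper re-runs the subhomogeneity scaling on a near-minimizer at distance $t$, which is the same computation), and the upper bound excludes points at distance greater than $t$ using exactly the strict-monotonicity consequence of Proposition \ref{theo3} that the paper cites as Remark \ref{adm1}. Your explicit handling of the finiteness of $c_0(t)$ and of the non-attained infimum matches what the paper leaves tacit.
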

\begin{proof}
	Let $t>$ be such that $0<c_0(t)$. Observe that the first inequality of (\ref{eq11}) is equivalent to
	\begin{equation}\label{key9}
		t\leq \diam\big(\mL(\bar x, c_0(t)\big).
	\end{equation}
	By Proposition \ref{pronew} and \ref{remark1}(ii), together with the definition of $c_0$, for every $\epsilon>0$ there exists $x_\epsilon\in D_{\bar x}$ such that
	\begin{equation}\label{keynew33}
		\|x_\epsilon-\bar x\|=\|T_M(x_\epsilon,\bar x) = t
	\end{equation}
	and 
	\begin{equation}\label{eq12}
		f(x_\epsilon)-f(\bar x)\leq c_0(t)+x_\epsilon.            
	\end{equation}
	Since $f$ is subhomogeneous with respect to $\bar x$ on $D_{\bar x}$, for any $s\in (0,t)$ we have
	$$
	\begin{array}{ll}
		f\left(\frac{t-s}{t} x_\epsilon +\frac{s}{t}\bar x \right) -f(\bar x)&\leq \frac{t-s}{t}f(x_\epsilon) +\frac{s}{t}f(\bar x)-f(\bar x)\\
		&= \frac{t-s}{t}(f(x_\epsilon)-f(\bar x)).    
	\end{array}$$
	By choosing $0<\epsilon\leq \frac{s}{t-s}c_0(t)$, the above inequality and (\ref{eq12}) give us that
	$$\begin{array}{ll}
		f\left(\frac{t-s}{t}x_\epsilon +\frac{s}{t}\bar x \right) -f(\bar x)&\leq \frac{t-s}{t}(f(x_\epsilon)-f(\bar x))\\
		&\leq \frac{t-s}{t}\left(c_0(t)+\epsilon\right)\\
		&\leq \frac{t-s}{t}\left(c_0(t)+\frac{s}{t-s}c_0(t)\right)=c_0(t).
	\end{array}	$$
	Thus, $x_s\in \mL(\bar x,c_0(t))$, where $x_s:=(\frac{t-s}{t}) x_\epsilon + (\frac{s}{t}) \bar x\in D_{\bar x}$. Moreover, one has
	$$
	\begin{array}{ll}
		\|x_s-\bar x\| &= \left\|\frac{t-s}{t} x_\epsilon - \frac{t-s}{t} \bar x\right\|\\
		&= \frac{t-s}{t}\| x_\epsilon - \bar x\|\\
		& = t-s,
	\end{array}$$
	where the last equality is fulfilled by \eqref{keynew33}.
	Thus, for any $s\in (0,t)$, there exists $x_s\in \mL(\bar x, c_0(t))$ such that $\|x_s-\bar x\| = t-s.$ This implies $$\diam (\mL(\bar x,c_0(t)))\geq t-s\text{ for all }s\in (t,0).$$ Letting $s\to 0$, we obtain the inequality (\ref{key9}), which therefore completes the proof of the first inequality of (\ref{eq11}).
	
	For the second inequality of (\ref{eq11}), we argue by way of contradiction. Assume that there exists $t_0>0$ such that $0<c(t_0)$ and $r(c_0(t_0))>t_0$. Equivalently, $\diam\big(\mL(\bar x,c_0(t_0))\big)>2t_0$. Then there exists an $x'\in \mL(\bar x,c_0(t_0))$ such that $$T_M(x',\bar x) =\|x'-\bar x\|=t' > t_0,$$ where the first equality is fulfilled by Proposition \ref{remark1}(ii). This together with the definition of $c_0$ ensures  
	\[
	c_0(t')\leq f(x') - f(\bar x) \leq c_0(t_0).
	\]
	However, since $0<c_0(t_0)$ and $t_0<t'$, Remark \ref{adm1} gives $c_0(t_0)<c_0(t')$, a contradiction. This completes the proof of the second inequality of (\ref{eq11}) and, therefore, of the whole theorem. 
\end{proof}

We end this section with a result that highlights  the relationship between admissible functions and level sets in terms of directional minimal times functions.

\begin{theorem} \label{theo5}
	The function $c_0$ is admissible if and only if $\diam\big(\mL(\bar x, \epsilon)\big)\to 0$ as $\epsilon\to 0$.
\end{theorem}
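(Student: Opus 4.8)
The plan is to establish the two implications separately, exploiting Theorem~\ref{level1}, Remark~\ref{adm2}, and the standing assumption of this section that $\bar x$ is a global directional minimum point, so that $f(\bar x)=\inf_{x\in D_{\bar x}}f(x)$ and hence $\bar x\in\mL(\bar x,\epsilon)$ for every $\epsilon>0$.

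For ``$c_0$ admissible $\Rightarrow$ $\diam(\mL(\bar x,\epsilon))\to 0$'', I would argue by contradiction. If the diameters do not tend to $0$, there are $\epsilon_k\to 0$ and $a>0$ with $\diam(\mL(\bar x,\epsilon_k))>2a$, and since a set of diameter larger than $2a$ must contain a point at distance more than $a$ from $\bar x$, we obtain $x_k\in\mL(\bar x,\epsilon_k)$ with $\|x_k-\bar x\|>a$. As $x_k\in D_{\bar x}\subset\dom T_M(\cdot,\bar x)$, Remark~\ref{remark1}(iii) gives $t_k:=T_M(x_k,\bar x)=\|x_k-\bar x\|>a$, while the definitions of $c_0$ and of $\mL(\bar x,\epsilon_k)$ force $c_0(t_k)\le f(x_k)-f(\bar x)\le\epsilon_k$. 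Thus $c_0(t_k)\to 0$ whereas $t_k>a$, contradicting admissibility of $c_0$.

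For the converse, the key step is to show that $\diam(\mL(\bar x,\epsilon))\to 0$ already forces $c_0(t)>0$ for every $t>0$; granting this, Remark~\ref{adm2} immediately yields that $c_0$ is admissible. To prove the claim, suppose $c_0(t^*)=0$ for some $t^*>0$. Since $c_0(t^*)<\infty$, the set $\{x\in D_{\bar x}\mid T_M(x,\bar x)=t^*\}$ is nonempty, and by the definition of the infimum there are $x_k$ in it with $f(x_k)-f(\bar x)<1/k$; hence $x_k\in\mL(\bar x,1/k)$, and Remark~\ref{remark1}(iii) gives $\|x_k-\bar x\|=t^*$. Because $\bar x\in\mL(\bar x,1/k)$ as well, $\diam(\mL(\bar x,1/k))\ge t^*>0$ for all $k$, contradicting $\diam(\mL(\bar x,1/k))\to 0$.

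I do not anticipate a serious obstacle: in both directions one confronts near-minimizing points (elements of $\mL(\bar x,\epsilon)$ for small $\epsilon$) that stay bounded away from $\bar x$, and then uses the identity $T_M(x,\bar x)=\|x-\bar x\|$ valid on $\dom T_M(\cdot,\bar x)$ to contradict, respectively, the admissibility of $c_0$ or the decay of $\diam(\mL(\bar x,\epsilon))$. The only delicate points are the edge cases where $c_0$ takes the value $0$ or $+\infty$, and the bookkeeping that every extracted point lies in $D_{\bar x}$ so that Remark~\ref{remark1}(iii) applies. As an alternative route for the converse, once $c_0>0$ on $(0,\infty)$ is known one may bypass Remark~\ref{adm2}: for any sequence $(t_k)$ with $c_0(t_k)\to 0$, Theorem~\ref{level1} gives $t_k/2\le r(c_0(t_k))$, and since $r(\epsilon)=\tfrac12\diam(\mL(\bar x,\epsilon))\to 0$ this forces $t_k\to 0$.
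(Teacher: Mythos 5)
Your proof is correct, and the converse direction is essentially the paper's: you reduce to showing $c_0(t)>0$ for all $t>0$ (via the same sequence $x_k$ with $T_M(x_k,\bar x)=t^*$ and $f(x_k)-f(\bar x)<1/k$, together with $\bar x\in\mL(\bar x,1/k)$) and then invoke Remark \ref{adm2}. The forward direction, however, is genuinely different. The paper splits into two cases according to whether $c_0$ is bounded away from $0$ on $(0,\infty)$, and in the nontrivial case obtains the quantitative chain $\diam(\mL(\bar x,\epsilon_k))=2r(\epsilon_k)\le 2r(c_0(t_k))\le 2t_k$, using the monotonicity of $r$ and the estimate of Theorem \ref{level1}, after which admissibility forces $t_k\to 0$. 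You instead argue by contradiction directly from the definition of $c_0$: a point $x_k\in\mL(\bar x,\epsilon_k)$ with $\|x_k-\bar x\|>a$ gives $t_k:=T_M(x_k,\bar x)>a$ (Remark \ref{remark1}(iii)) and $c_0(t_k)\le f(x_k)-f(\bar x)\le\epsilon_k$, contradicting admissibility. Your route buys simplicity and a bit of generality: it does not pass through Theorem \ref{level1}, hence does not need the directional subhomogeneity of $f$ for that implication, only that $\bar x$ is a global directional minimum so that $\bar x\in\mL(\bar x,\epsilon)$ and Remark \ref{remark1}(iii) applies. The paper's route is less economical but yields the explicit bound $\diam(\mL(\bar x,c_0(t)))\le 2t$ and keeps Theorem \ref{level1} as the workhorse of the section; your closing alternative for the converse (using $t_k/2\le r(c_0(t_k))$ once $c_0>0$ is known) is also valid and shows the two mechanisms are interchangeable there.
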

\begin{proof}
	Assume that $c_0$ is admissible. By Definition \ref{admisd} and Proposition \ref{theo3}, one has $c_0(t)>0$ for all $t>0$. Two cases are possible:  
	
	(A) there exists an $\epsilon_0 > 0$ such that $c_0(t)>\epsilon_0$ for all $t>0$ and 
	
	(B) for any sequence $(\epsilon_k)$ satisfying $\epsilon_k\rightarrow 0$, there exists a sequence $(t_k)$ such that $t_k>0$ and $c_0(t_k)\leq \epsilon_k$ for all $k\in \N$. 
	
	Let us show below that in each of these cases, we arrive at the conclusion of the first implication. In case (A), one has $$f(x)-f(\bar x)>\epsilon_0\quad\text{for all }x\in D_{\bar x}\text{ satisfying }T_M(x,\bar x)>0.$$ 
	Thus, $\mL(\bar x, \delta)=\{\bar x\}$ for all $\delta \in (0,\epsilon_0)$, which implies that $\diam\big(\mL(\bar x, \epsilon)\big)\to 0$ as $\epsilon\to 0$. 
	In case (B), one has
	\begin{equation}\label{eq9}
		\diam\big(\mL(\bar x,\epsilon_k)\big) = 2r(\epsilon_k)\leq 2r\big(c_0(t_k)\big)\leq 2t_k,
	\end{equation}
	where  the first inequality holds because $r$ is a decreasing function and the second inequality is satisfied by Theorem \ref{level1}. 
	Notice that since the function $c_0$ is admissible, we deduce from the definition that $t_k\to 0$.
	Combining this with (\ref{eq9}), we obtain $\diam\big(\mL(\bar x,\epsilon_k)\big)\rightarrow 0$ for any sequence $\epsilon_k\rightarrow 0$. 
	
	Conversely, assume that $\diam\big(\mL(\bar x,\epsilon)\big)\rightarrow 0$ as $\epsilon\rightarrow 0$. By Remark \ref{adm2}, we only need to show that $$c_0(t)>0\quad\text{for all }t>0.$$ Indeed, if there exists $t_0>0$ such that $c_0(t_0)=0$, then for every $k\in \N$ we can find $x_k\in D_{\bar x}$ such that $$T_M(x_k,\bar x)=t_0\quad\text{and}\quad f(x_k)-f(\bar x)<1/k.$$ This means $x_k\neq \bar x$ and $x_k\in \mL(\bar x, 1/k)$ for all $k\in \N$. However, since $\bar x\in \mL(\bar x, 1/k)$ for all $k\in \N$, this contradicts the assumption that $\diam\big(\mL(\bar x,\epsilon)\big)\rightarrow 0$ as $\epsilon\to 0$. Therefore, we conclude that the function $c_0$ is admissible. 
\end{proof}




\section*{Acknowledgements}
The first author is supported by National Science and Technology Council (NSTC 112-2115-M-003-012-MY2, NSTC 113-2124-M-003-002); Higher Education Sprout Project-Center for Optimal Intelligent Data Analytics and Prediction. The third author is supported by Vietnam National Foundation
for Science and Technology Development (NAFOSTED) grant 101.01-2025.14

\section*{Disclosure statement}
No potential conflict of interest was reported by the author(s).


\end{document}